\theoremstyle{definition}
\newtheorem{theorem}{Theorem}[section]
\newtheorem{theoremx}{Theorem}
\numberwithin{equation}{section}
\newtheorem*{theorem*}{Theorem}
\newtheorem{corollary}[theorem]{Corollary}
\newtheorem{lemma}[theorem]{Lemma}
\newtheorem{proposition}[theorem]{Proposition}
\newtheorem{notation}[theorem]{Notation}
\newtheorem*{claim*}{Claim}
\theoremstyle{definition}
\newtheorem{definition}[theorem]{Definition}
\newtheorem{conjecture}[theorem]{Conjecture}
\newtheorem{remark}[theorem]{Remark}
\newtheoremstyle{TheoremNum}
        {8pt}{8pt}              
        {\upshape}                      
        {}                              
        {\bfseries}                     
        {.}                             
        {.5em}                             
        {\thmname{#1}\thmnote{ \bfseries #3}}
  \theoremstyle{TheoremNum}
\newcommand{\NN}{\mathbb{N}}
\newcommand{\ZZ}{\mathbb{Z}}
\newcommand{\HF}{\operatorname{HF}}
\newcommand{\ov}[1]{\overline{#1}}
\renewcommand{\leq}{\leqslant}
\renewcommand{\geq}{\geqslant}
\newcommand{\kk}{\Bbbk}
\newcommand{\out}{\mathfrak{out}}
\renewcommand{\d}{\mathbf{d}}
\newcommand{\EGHH}[2]{{\rm EGH}_{{#1},{#2}}}
\newcommand{\LL}{\mathbb{L}}
\newcommand{\f}{\mathfrak{f}}
\newcommand{\x}{\mathsf{x}}
\newcommand{\g}{\mathfrak{g}}
\newcommand{\lex}{{\rm lex}}
\title[The EGH conjecture for fast-growing degree sequences]{The Eisenbud-Green-Harris conjecture for fast-growing degree sequences}
\author{Giulio Caviglia}
\address{Department of Mathematics, Purdue University, 150 N. University Street, West Lafayette, IN 47907-2067, USA}
\email{gcavigli@purdue.edu}
\author{Alessandro De Stefani}
\address{Dipartimento di Matematica, Universit{\`a} di Genova, Via Dodecaneso 35, 16146 Genova, Italy}
\email{destefani@dima.unige.it}
\subjclass[2010]{13D02, 13A02, 13A15, 13P10}
\keywords{Hilbert function, lexicographic ideal, lex-plus-power ideal, regular sequence}
\begin{document}

\begin{abstract}
Let $S$ be a standard graded polynomial ring over a field, and $I$ be a homogeneous ideal that contains a regular sequence of degrees $d_1,\ldots,d_n$. We prove the Eisenbud-Green-Harris conjecture when the forms of the regular sequence satisfy $d_i \geq \sum_{j=1}^{i-1}(d_j-1)$, improving a result obtained in 2008 by the first author and Maclagan. Except for the sporadic case of a regular sequence of five quadrics, recently proved by G{\"u}nt{\"u}rk{\"u}n and Hochster, the results of this article recover all known cases of the conjecture where only the degrees of the regular sequence are fixed, and include several additional ones. 
\end{abstract} 

\maketitle

\section{Introduction}
This article deals with a conjecture of Eisenbud, Green and Harris which lies at the crossroad of commutative algebra, classical algebraic geometry, and extremal combinatorics. The conjecture originates from interpreting a classical result on points that lie on the intersection of three plane cubics, the Cayley-Bacharach theorem \cite{EGH_CB}, from the perspective of a result from extremal combinatorics, the Kruskal-Katona theorem on the number of faces of a simplicial complex \cite{Kruskal,Katona}, later generalized by Clements and Lindstr{\"o}m \cite{CL}.



In algebraic terms, the Eisenbud-Green-Harris conjecture (henceforth EGH, see Conjecture \ref{EGH}) is also an attempt to improve the celebrated theorem of Macaulay, which classifies all the possible Hilbert functions of homogeneous ideals in a polynomial ring over a field, by taking into account additional information about the ideals themselves \cite{EGH}. Namely, the information that an ideal contains a regular sequence of given degrees $d_1,\ldots,d_h$ should, in principle, give more restrictions on the growth of its Hilbert function. In this setup, the natural substitute for the lexicographic ideals studied by Macaulay are the so-called lex-plus-power ideals (henceforth, LPP). As the name suggests, such objects are simply monomial ideals that can be written as a lexicographic ideal plus an ideal generated by pure powers of the variables. 


The EGH conjecture for ideals containing a regular sequence of degrees $d_1 \leq\ldots \leq d_h$, despite being widely open in general, is known to hold in a few specific cases, which can be divided in two categories. The first category includes regular sequences or ideals of special type; for instance, the case when the regular sequence is monomial \cite{CL,MP,CK1}, when the ideal is generated by a quadratic regular sequence plus general quadratic forms \cite{HP,Gash}, when the regular sequence decomposes as a product of linear forms \cite{A}, or when the ideal is monomial (for quadrics see \cite{CCV}, in general see \cite{A1}). The second category includes cases in which only the degrees of the regular sequence and its length are fixed: the case $h=2$ \cite{Richert,Cooper}, the case $h=3$, provided $d_1=2$ or $d_1=3$ and $d_2=d_3$ \cite{Cooper1}, and the case $d_1=\ldots=d_h=2$ and $h \leq 5$ \cite{GuHo}. In the second category falls also a result due to the first author and Maclagan from 2008, where they prove that the EGH conjecture holds if the degrees satisfy $d_i > \sum_{j=1}^{i-1}(d_j-1)$ for all $i \geq 3$,  see \cite[Theorem 2]{CM}. %


With the exception of the case of five quadrics due to G{\"u}nt{\"u}rk{\"u}n and Hochster \cite{GuHo}, our main result, together with Proposition \ref{Prop Cooper}, covers all known cases which fall in the second category, and includes several more.

\begin{theoremx}[see Corollary \ref{Cor socle}] \label{THMX A}Let $I \subseteq S=\kk[x_1,\ldots,x_n]$ be a homogeneous ideal containing a regular sequence of degrees $d_1\leq \ldots \leq d_h$, such that $d_i \geq \sum_{j=1}^{i-1} (d_j-1)$ for all $i \geq 3$. Then $I$ satisfies the EGH conjecture, that is, there exists a lex-plus-power ideal containing $(x_1^{d_1},\ldots,x_h^{d_h})$ with the same Hilbert function as $I$.
\end{theoremx}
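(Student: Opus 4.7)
The plan is to proceed by induction on $h$, the length of the regular sequence. The base cases $h \leq 2$ are known in the literature, so the work is in the inductive step with $h \geq 3$: assume the EGH conjecture holds for all regular sequences of length $h-1$ satisfying the same growth hypothesis, and deduce the statement for length $h$.

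The first step is to apply the inductive hypothesis to the truncated regular sequence $f_1,\ldots,f_{h-1}$, combined with a Gröbner (or distraction) deformation. Since LPP ideals behave predictably under suitable initial-ideal operations, one can reduce, without changing the Hilbert function of $I$, to the situation where $f_i = x_i^{d_i}$ for $1 \leq i \leq h-1$, and $\bar f_h$ is a non-zerodivisor of degree $d_h$ on $R = S/(x_1^{d_1},\ldots,x_{h-1}^{d_{h-1}})$. The problem then becomes: show that the Hilbert function of $R/\bar I$ agrees with that of $R/\bar J$ for some LPP ideal $J$ containing $(x_1^{d_1},\ldots,x_h^{d_h})$.

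The heart of the proof is a comparison of Hilbert functions at and beyond degree $d_h$. The growth hypothesis $d_h \geq \sum_{j=1}^{h-1}(d_j-1)$ places $d_h$ at or past the socle degree of the complete intersection of pure powers $(x_1^{d_1},\ldots,x_{h-1}^{d_{h-1}})$ in the Artinian range; in this regime Clements--Lindström applies tightly and essentially identifies lex-segment behavior with extremality. Using this, I would argue that replacing $\bar f_h$ by $\bar{x_h^{d_h}}$ plus a lex segment in degree $d_h$ produces the same Hilbert function growth, which then bootstraps, degree by degree, via Macaulay-in-$R$ into the desired LPP comparison. This is the content of the socle-level statement suggested by Corollary \ref{Cor socle}.

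The main obstacle, and the gain over \cite[Theorem 2]{CM}, is the boundary case $d_h = \sum_{j=1}^{h-1}(d_j-1)$. In the strict-inequality setting one has enough numerical slack to perform the needed deformation directly on $\bar f_h$; at the boundary, $x_h^{d_h}$ meets $\soc(R)$, so multiplication by $\bar{x_h^{d_h}}$ on $R$ is not injective and the naive comparison fails. The plan is to split off the socle: analyze the short exact sequence relating $R$, the multiplication-by-$\bar f_h$ map, and the socle in degree $\sum(d_j-1)$, and show that the possible cokernel Hilbert functions in this critical degree are still realized by some LPP ideal. Once this socle-analysis lemma is in place, the combinatorial framework of Clements--Lindström closes the induction uniformly across both the strict and boundary regimes.
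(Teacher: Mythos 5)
Your first step --- reducing via ``Gr\"obner or distraction deformation'' to the situation where $f_i = x_i^{d_i}$ for $1 \leq i \leq h-1$ --- is a genuine gap, and it is the central one. No such reduction is available: if one could replace an arbitrary regular sequence inside an arbitrary ideal $I$ by the pure powers of the same degrees without changing $\HF(I)$, the full EGH conjecture (with no hypothesis on $\d$ at all) would follow at once from Clements--Lindstr\"om, and this is precisely what is not known how to do. The inductive hypothesis you actually have is weaker: for each ideal containing $(f_1,\ldots,f_{h-1})$ it supplies a $\d'$-LPP ideal with the same Hilbert function, but that LPP ideal carries no information about $f_h$ or its image, so there is nothing to transport across your proposed deformation. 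The only legitimate reduction of this type is the Artinian one of \cite[Theorem 4.1]{CK}, which collapses extra variables $x_{h+1},\ldots,x_n$ by general linear forms; the $f_i$ themselves must stay as they are.

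The paper's argument avoids this step entirely. After \cite[Lemma 12]{CM} and the strict-inequality case of \cite{CM}, it reduces to verifying a single growth condition $\EGHH{\d}{n}(d_n-1)$ in the boundary case $d_n=\sum_{i<n}(d_i-1)$. It then forms the ideal $Q$ generated by $f_1,\ldots,f_{n-1}$ together with a basis of $I_{d_n-1}$; in the hard sub-case $f_n\in Q$, it extracts a complete intersection $\g=(f_1,\ldots,f_{n-1},v_c)$ of degrees $\d''=(d_1,\ldots,d_{n-1},d_n-1)$, whose socle degree $2d_n-2$ places $d_n-1$ and $d_n$ symmetrically, and links $Q$ with respect to $\g$. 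The $\d'$-inductive hypothesis is applied to the linked ideal $J=\g:Q$, and the resulting Hilbert function data is realized by a $\d$-LPP ideal through a chain of carefully constructed LPP modifications $\LL_1,\dots,\LL_5,\LL(\d)$, using \cite[Theorem 1.2]{MP} and Gotzmann persistence. Your instinct that the socle degree $\sum(d_j-1)$ is the source of the boundary difficulty is correct, but a short exact sequence for multiplication by the image of $f_h$, together with a ``bootstrap via Macaulay-in-$R$,'' is too vague to substitute for this linkage-and-LPP-comparison argument, and in any case rests on the unjustified reduction above.
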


We actually prove a stronger statement than Theorem \ref{THMX A}: if any homogeneous ideal $I$ containing a given regular sequence $f_1,\ldots,f_{h-1}$ of degrees $d_1 \leq \ldots \leq d_{h-1}$ satisfies the EGH conjecture with respect to such a sequence, then $I+(f_h)$ satisfies the EGH conjecture with respect to the sequence $f_1,\ldots,f_{h-1},f_h$ for any element $f_h$ which is regular modulo $(f_1,\ldots,f_{h-1})$, and has degree at least $\sum_{i=1}^{h-1}(d_i-1)$ (see Theorem \ref{THM socle}).

We point out that the strategy employed in \cite{CM} could not be directly used to prove Theorem \ref{THMX A}. In fact, \cite[Theorem 2]{CM} relies on estimates on the Hilbert function of ideals containing shorter regular sequences, which are then glued together using linkage. In Theorem \ref{THMX A}, on the other hand, if $d_i=\sum_{j=1}^{i-1} (d_j-1)$ for some $i$ there is one critical degree, namely $d_i$, where the gluing process could in principle go wrong. We tackle this problem by carefully estimating the growth of the Hilbert function in that degree, and comparing it with the one of the corresponding LPP ideal. Using the same techniques, in Proposition \ref{Prop Cooper} we also prove some additional results which are not covered by the main theorem. 

As a final remark, consider the following very concrete scenario. Suppose we are given an ideal $I \subseteq \kk[x_1,x_2,x_3,x_4]$, with the knowledge that $I$ contains a regular sequence of degrees $(4,5,6)$, and that $\HF(I;6) = 20$. While the EGH conjecture is not known to hold in this case, after possibly enlarging the field $\kk$ (which does not affect our considerations) the ideal $I$ will certainly contain a regular sequence of degrees $(4,5,7)$. Then we can apply Theorem \ref{THMX A} to obtain that $\HF(I;7) \geq 41$. While this is not the sharpest estimate predicted by the EGH conjecture (that is, $\HF(I;7) \geq 43$), it still provides a significantly better estimate than the one coming from Macaulay's Theorem (that is, $\HF(I;7) \geq 35$). 

The strategy outlined in the example above illustrates that 
the main result of \cite{CM} and our improvement, Theorem \ref{THMX A}, provide an estimate on the growth of the Hilbert function, which is more accurate than Macaulay's Theorem, for {\it any} homogeneous ideal.

\section{Preliminaries}
Let $\kk$ be a field, and $S=\kk[x_1,\ldots,x_n]$ be a polynomial ring over $\kk$, with the standard grading. Given a finitely generated graded $S$-module $M = \bigoplus_{j \in \ZZ} M_j$, we will denote by $\HF(M;j) = \dim_\kk M_j$ its Hilbert function in degree $j$.

On $S$ we will consider the degree-lexicographic order, which we denote $>_{\lex}$, and the variables of $S$ will be ordered by $x_1 >_{\lex} x_2 >_{\lex} \ldots >_{\lex} x_n$. A graded vector space $V$ is a lex-segment if, whenever $u,v \in S$ are monomials of the same degree with $u >_{\lex} v$,  then $u \in V$ whenever $v \in V$. A monomial ideal $L \subseteq S$ is called a lexicographic ideal if its graded components $L_j$ are lex-segments for all $j \in \ZZ$.

A degree sequence is a vector $\d=(d_1,\ldots,d_h) \in \NN^h$, with $1 \leq d_1 \leq \ldots \leq d_h$. Given a regular sequence $f_1,\ldots,f_h \in S$ we say that it has degree $\d = (d_1,\ldots,d_h)$ if $\deg(f_i)=d_i$ for $i=1,\ldots,h$.

\begin{definition} Let $S=\kk[x_1,\ldots,x_n]$, and $\d$ be a degree sequence. A monomial ideal $\LL$ is called a $\d$-LPP ideal if  $\LL = (\x^\d) + L$, where $L$ is a lexicographic ideal.
\end{definition}

\begin{remark} \label{Remark oplus}
If $\LL$ is a $\d$-LPP ideal then we may also write $\LL=(\x^\d) \oplus V$, where $V=\bigoplus_j V_j$ is a graded $\kk$-vector space generated by monomials. Observe that, in general, $V$ is not an ideal.
\end{remark}

We now recall the most current version of the Eisenbud-Green-Harris conjecture (see \cite{CM}).
\begin{conjecture}[$\EGHH{\d}{n}$] \label{EGH} Let $I \subseteq S = \kk[x_1,\ldots,x_n]$ be a homogeneous ideal containing a regular sequence of degree $\d$. We say that $I$ satisfies $\EGHH{\d}{n}$ if there exists a $\d$-LPP ideal with the same Hilbert function as $I$.
\end{conjecture}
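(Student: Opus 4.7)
The plan is to prove Theorem \ref{THMX A} by induction on the length $h$ of the regular sequence contained in $I$. The base cases $h\leq 2$ are classical: $h=1$ is Macaulay's theorem, and $h=2$ is \cite{Richert, Cooper}. For the inductive step I adopt the stronger assertion advertised in the introduction, namely Theorem \ref{THM socle}: if an ideal $J \supseteq (f_1,\ldots,f_{h-1})$ already satisfies $\EGHH{(d_1,\ldots,d_{h-1})}{n}$ and $f_h$ is regular modulo $(f_1,\ldots,f_{h-1})$ of degree $d_h \geq \sum_{i=1}^{h-1}(d_i-1)$, then $J+(f_h)$ satisfies $\EGHH{(d_1,\ldots,d_h)}{n}$. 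Granted this, Theorem \ref{THMX A} falls out by applying the strong statement with $J = I$: the ideal $I$ already contains $f_h$, so $J+(f_h)=I$, and the inductive hypothesis for length $h-1$ supplies $\EGHH{(d_1,\ldots,d_{h-1})}{n}$ for $J$.

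To attack the strong statement, fix a $(d_1,\ldots,d_{h-1})$-LPP ideal $\LL = (x_1^{d_1},\ldots,x_{h-1}^{d_{h-1}}) + L$ with $\HF(\LL;j)=\HF(J;j)$ for every $j$. My candidate is an ideal of the form $\LL' = \LL + (x_h^{d_h}) + W$, with $W$ a lex-segment $\kk$-subspace chosen degree by degree so that the Hilbert functions match. The numerics are driven by the short exact sequence
\[
0 \to (S/(J:f_h))(-d_h) \xrightarrow{\cdot f_h} S/J \to S/(J+(f_h)) \to 0,
\]
which yields $\HF(J+(f_h);j)=\HF(J;j)+\HF(S/(J:f_h);j-d_h)$, together with its monomial analog for $\LL'$. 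Below degree $d_h$ both sides already equal $\HF(J;j)=\HF(\LL;j)$; in degrees $j \geq d_h$ the task reduces to comparing $\HF(S/(J:f_h);j-d_h)$ with $\HF(S/(\LL:x_h^{d_h});j-d_h)$ and absorbing the discrepancy into $W$. The degree hypothesis $d_h \geq \sum_{i=1}^{h-1}(d_i-1)$ is exactly what is needed to dominate $(J:f_h)$: in low degrees $S/(J:f_h)$ embeds into a shift of an Artinian reduction of $S/(f_1,\ldots,f_{h-1})$, whose socle lives in degree $\sum_{i=1}^{h-1}(d_i-1)$, while a parallel control of $(\LL:x_h^{d_h})$ comes directly from the lex-segment structure of $L$.

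The main obstacle, as the authors stress, is the critical degree $j=d_h$ in the borderline case $d_h=\sum_{i=1}^{h-1}(d_i-1)$: a nonzero degree-$0$ contribution from $S/(J:f_h)$ can appear, and the LPP candidate must match this jump \emph{exactly}, not merely bound it from above. This is precisely where the linkage-and-gluing strategy of \cite{CM} breaks down. My plan at this step is a direct dimension count: combine the EGH hypothesis on $J$ with Macaulay-type growth estimates to pin down $\dim_\kk (S/(J:f_h))_0$ and show it equals $\dim_\kk (S/(\LL:x_h^{d_h}))_0$. Once equality is achieved in every degree, $W$ is uniquely determined as a sequence of lex-segments, and a routine check confirms that the resulting $\LL'$ is indeed a $(d_1,\ldots,d_h)$-LPP ideal.
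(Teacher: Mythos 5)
Your overall architecture — induction on the length of the regular sequence, with the inductive step being the ``adding one more form'' statement of Theorem~\ref{THM socle} — matches the paper's. But the mechanism you propose for the inductive step departs substantially from the paper's, and as written it has a genuine gap at the decisive point.

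The paper does not attack the colon ideal $J:f_h$ directly. Instead it first invokes \cite[Lemma 12]{CM} to reduce to verifying $\EGHH{\d}{n}(j)$ only in the finite range $0\leq j\leq d_n-1$, then restricts attention to the subideal $Q$ of $I$ generated by $\f'$ together with a basis of $(I/\f)_{d_n-1}$; because $Q$ is generated in degree at most $d_n-1$, one can find a complete intersection $\g=(f_1,\ldots,f_{n-1},v_c)$ of degree $(d_1,\ldots,d_{n-1},d_n-1)$ inside $Q$ and carry out linkage with respect to $\g$. That specific choice of degree $(d_1,\ldots,d_{n-1},d_n-1)$ — one less than $d_n$ in the last slot — is what makes both $Q$ and its link $\g:Q$ amenable to the $\EGHH{\d'}{}$ hypothesis and to \cite[Theorem 1.2]{MP}, and the borderline case is closed out with Gotzmann's Persistence (Lemma~\ref{Lemma Gotzmann}). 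Your plan replaces all of this by the exact sequence for $\cdot f_h$ and a direct comparison of $\HF(S/(J:f_h))$ with $\HF(S/(\LL:x_h^{d_h}))$, which is a genuinely different route.

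The gap is in the final two sentences. You write that once $W$ is chosen degree by degree as lex segments so that the Hilbert functions match, ``a routine check confirms that the resulting $\LL'$ is indeed a $(d_1,\ldots,d_h)$-LPP ideal.'' This check is not routine: a degree-by-degree lex-segment construction with a prescribed Hilbert function is an \emph{ideal} only if that Hilbert function obeys the growth inequalities that $\d$-LPP ideals impose, and establishing exactly that is the content of the EGH conjecture, not a corollary of it. Similarly, the ansatz $\LL'=\LL+(x_h^{d_h})+W$ with $W$ ``absorbing the discrepancy'' tacitly assumes the inequality $\HF(S/(\LL:x_h^{d_h});j-d_h)\leq\HF(S/(J:f_h);j-d_h)$, equivalently that the $\d$-LPP ideal of $J+(f_h)$ contains $\LL+(x_h^{d_h})$; you give no argument for this, and it is not clear why it should hold. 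Finally, the ``direct dimension count'' in the critical degree amounts to deciding whether $f_h\in J$ exactly when $x_h^{d_h}\in\LL$, which again does not follow from $\HF(J)=\HF(\LL)$ alone. These are precisely the points where the paper brings in the restriction to low degrees, linkage through $\g$, Lemma~\ref{Lemma lex}, \cite[Theorem 1.2]{MP}, and Gotzmann's Persistence — none of which your outline supplies substitutes for.
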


Observe that the case in which no regular sequence is taken into account is the well-known Macaulay's Theorem on the existence of a lexicographic ideal with the same Hilbert function as a given homogeneous ideal. As in Macaulay's Theorem, the $\d$-LPP ideal of Conjecture \ref{EGH} is unique, whenever it exists.

We make the following definition, based on \cite[Definition 11]{CM}. 

\begin{definition} Let $j$ be a non-negative integer, and $I \subseteq S = \kk[x_1,\ldots,x_n]$ be a homogeneous ideal containing a regular sequence of degree $\d$. We say that $I$ satisfies $\EGHH{\d}{n}(j)$ if there exists a $\d$-LPP ideal $\LL$ such that $\HF(I;j) = \HF(\LL;j)$ and $\HF(I;j+1) \geq \HF(\LL;j+1)$.
\end{definition}

It is immediate to see that an ideal $I$ satisfies $\EGHH{\d}{n}$ if and only if it satisfies $\EGHH{\d}{n}(j)$ for all non-negative integers $j$. 
\section{Main result}

We start by setting up some notation. Let $\f = (f_1,\ldots,f_h)$ be an ideal of $S=\kk[x_1,\ldots,x_n]$ generated by a regular sequence of degrees $\d=(d_1,\ldots,d_h)$, with $1 \leq d_1 \leq \ldots \leq d_h$. After possibly enlarging $\kk$, which does not affect any of our considerations, we may find $n-h$ linear forms $\ell_{1},\ldots,\ell_{n-h}$ such that $f_1,\ldots,f_h,\ell_1,\ldots,\ell_{n-h}$ is a maximal regular sequence in $S$. After a change of coordinates, we may assume that $\ell_i = x_{h+i}$ for all $i=1,\ldots,n-h$, and therefore we may view $\f+(x_{h+1},\ldots,x_n)$ as an ideal $\ov{\f}$ inside $\ov{S}$, still generated by a regular sequence of degrees $\d$. We will refer to $\ov{\f}$ as an Artinian reduction of $\f$. Clearly, the Artinian reduction depends on the choice of the original linear forms $\ell_{1},\ldots,\ell_{n-h}$. However, by \cite[Theorem 4.1]{CK} we have that if every homogeneous ideal of $\ov{S}$ that contains $\ov{\f}$ satisfies $\EGHH{\d}{h}$, then every homogeneous ideal of $S$ that contains $\f$ satisfies $\EGHH{\d}{n}$.
\begin{remark} To the best of our knowledge, in all the cases where the EGH conjecture is known to hold the converse to the last statement is also true. For instance, most of the known cases only require numerical conditions on the degree sequence $\d$ to be satisfied, independently of whether $\f$ is Artinian or not.
\end{remark}

Before proving the main theorem, we need some preparatory results. 

\begin{lemma} \label{Lemma lex} Let $\d=(d_1,\ldots,d_h)$ be a degree sequence, and $\LL$ be a $\d$-LPP ideal such that $\HF(\LL;D)>\HF((\x^\d);D)$ for some integer $D$. Given $D'$ such that $D \leq D' \leq \sum_{i=1}^h (d_i-1)$, there exists a $\d$-LPP ideal $\LL'$ such that 
\[
\HF(\LL';j) = \begin{cases}  \HF((\x^\d);j) & \text{ if } j < D \\ \HF(\LL;j)-1 & \text{ if } D \leq j \leq D' \\
\HF(\LL;j) & \text{ otherwise. }
\end{cases}
\]
\end{lemma}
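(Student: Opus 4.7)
I write $\LL = (\x^\d) \oplus V$ as in Remark~\ref{Remark oplus}, so that each $V_j$ is a lex-segment of $\d$-admissible monomials (those not divisible by any $x_i^{d_i}$); the hypothesis $\HF(\LL;D) > \HF((\x^\d);D)$ becomes $V_D \neq 0$. The plan is to ``shrink'' $V$ by exactly one monomial in each degree $D \leq j \leq D'$ in a way that preserves the $\d$-LPP structure. The candidate is thus uniquely determined: set $V'_j = \emptyset$ for $j < D$, let $V'_j$ be the lex-segment of $\d$-admissible monomials of size $|V_j| - 1$ for $D \leq j \leq D'$, and take $V'_j = V_j$ for $j > D'$. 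With $\LL' := (\x^\d) \oplus V'$, the Hilbert function count $\HF(\LL';j) = \HF((\x^\d);j) + |V'_j|$ immediately yields the three-case formula, so the entire problem reduces to showing that $\LL'$ is a well-defined $\d$-LPP ideal.

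The first step is to verify the truncations make sense, i.e.\ that $|V_j| \geq 1$ throughout $D \leq j \leq D'$. Setting $m_j := \min_{>_\lex} V_j$, the ideal property of $\LL$ forces $m_j \cdot x_i \in V_{j+1}$ whenever $m_j \cdot x_i$ remains $\d$-admissible; the bound $D' \leq \sum_{i=1}^h (d_i - 1)$ is precisely the socle degree of $S/(\x^\d)$, and this is what guarantees a $\d$-admissible multiple is available at every step up to degree $D'$. Once non-emptiness is confirmed, each $V'_j$ is a lex-segment of $\d$-admissibles by construction, and the passage from $\LL'$ to a presentation $(\x^\d) + L'$ with $L'$ a lex ideal is a standard verification parallel to the existence part of Macaulay's theorem.

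The main obstacle is proving the ideal condition $V'_j \cdot S_1 \subseteq (\x^\d)_{j+1} \oplus V'_{j+1}$. For $v \in V'_j$ and any variable $x_i$, the product $v \cdot x_i$ lies in $(\x^\d)_{j+1} \cup V_{j+1}$ because $\LL$ itself is an ideal; the only potentially problematic case is $v \cdot x_i = m_{j+1}$, which has been removed from $V'_{j+1}$. The plan is to show that any such preimage $v$ of $m_{j+1}$ lying in $V_j$ must be $m_j$ itself: any $w \neq m_j$ with $w$ a $\d$-admissible divisor of $m_{j+1}$ would satisfy $w >_\lex m_j$, but then a careful comparison of the exponent vectors, combined with the fact that $m_j = \min V_j$ and $m_{j+1} = \min V_{j+1}$ are linked through the greedy choice of variable, forces $w = m_j$. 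The boundary case $j = D'$ is the subtlest: there $V'_{D'+1} = V_{D'+1}$, so what is needed is that every $\d$-admissible multiple of $V'_{D'}$ already lies in $V_{D'+1}$, which is guaranteed by the ideal property of $\LL$ itself. Putting these pieces together yields $\LL'$ as the desired $\d$-LPP ideal.
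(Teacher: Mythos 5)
Your proposal follows exactly the paper's route: write $\LL=(\x^\d)\oplus V$, delete the lex-smallest monomial $m_j$ from $V_j$ for $D\leq j\leq D'$, and verify that the result is still an ideal. The crux, which you correctly isolate as ``the main obstacle,'' is the claim that the only element of $V_j$ dividing $m_{j+1}$ is $m_j$ itself. The paper dispatches this with an appeal to ``standard properties of lex-segments,'' and you with ``a careful comparison of the exponent vectors\dots forces $w=m_j$''; neither spells out the comparison, and in fact the claim is false. Take $S=\kk[x_1,x_2,x_3]$, $\d=(2,2,2)$, $\LL=(x_1,x_2,x_3)$, so $V_1=\{x_1,x_2,x_3\}$, $V_2=\{x_1x_2,x_1x_3,x_2x_3\}$, $m_1=x_3$, $m_2=x_2x_3$. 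Then $x_2\in V_1$ divides $m_2$ although $x_2\neq m_1$, and with $D=1$, $D'=2$ the truncated candidate (with $V'_1=\{x_1,x_2\}$, $V'_2=\{x_1x_2,x_1x_3\}$) fails the ideal test at $x_2\cdot x_3=x_2x_3$, which lies in neither $(\x^\d)_2$ nor $V'_2$.

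Moreover this is not merely a gap in the argument: here $D'=2\leq 3=\sum_i(d_i-1)$, yet no $(2,2,2)$-LPP ideal has Hilbert function $(0,2,5,10,\ldots)$, because such an $\LL'$ would have $\LL'_1=\langle x_1,x_2\rangle$ (the unique $2$-dimensional lex segment), forcing $\LL'_2\supseteq\langle x_1,x_2\rangle\cdot S_1+(\x^\d)_2=S_2$, so $\HF(\LL';2)=6\neq 5$. Thus the lemma as stated fails, and your proof inherits the same defect as the paper's: the ``unique preimage'' claim needs additional hypotheses (for instance restricting how close $D$ is to the socle degree, or assuming additional structure on $\LL$ near degree $D$) or a genuinely different way of trimming $V$. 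As written, the exponent-vector comparison you gesture at cannot be completed.
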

\begin{proof}
As in Remark \ref{Remark oplus}, we may write $\LL=(\x^\d) \oplus V$, where $V=\bigoplus_{j} V_j$ is a graded vector space generated by monomials. By assumption $V_D \ne 0$, and this implies $V_j \ne 0$ for all $D \leq j \leq \sum_{i=1}^h (d_i-1)$. In particular, $V_j \ne 0$ for $D \leq j \leq D'$. If we let $\ov{V}_j$ be the $\kk$-vector space generated by all monomials of $V_j$ except the smallest with respect to the lexicographic order, then $W = \left(\bigoplus_{D \leq j \leq D'} \ov{V}_j \right) \oplus \left(\bigoplus_{j > D'} V_j \right)$ is such that $\LL' = (\x^\d) \oplus W$ is still an ideal. In fact, for $D < j \leq D'$, if the smallest monomial $v \in V_j$ is divisible by a monomial $u \in V_{j-1}$, then $u$ must necessarily be the smallest monomial of $V_{j-1}$ with respect to the lexicographic order, by standard properties of lex-segments. The fact that $\LL'$ is a $\d$-LPP ideal and it satisfies the desired conditions on the Hilbert function are now trivial to check.
\end{proof}
The following lemma is a direct consequence of Gotzmann's Persistence Theorem \cite{G,Green_Gotzmann}. We single this result out, since it will be used as stated in the proof of the main theorem.
\begin{lemma} \label{Lemma Gotzmann} Let $I \subseteq S=\kk[x_1,\ldots,x_n]$ be a homogeneous ideal generated in degree at most $D$. If $0<\HF(S/I;D) = \HF(S/I;D+1) \leq D$, then $\dim(S/I) = 1$. 
\end{lemma}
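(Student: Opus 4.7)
The plan is to derive the conclusion directly from Gotzmann's Persistence Theorem, using the smallness hypothesis $m := \HF(S/I;D) \leq D$ to pin down the Macaulay bound explicitly. Set $m = \HF(S/I;D)$, so $0 < m \leq D$ by assumption.

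First, I would compute the Macaulay bound $m^{\langle D \rangle}$. Since $m \leq D$, the Macaulay representation of $m$ in degree $D$ is simply
\[
m = \binom{D}{D} + \binom{D-1}{D-1} + \cdots + \binom{D-m+1}{D-m+1},
\]
which is a sum of $m$ ones with strictly decreasing lower indices $D > D-1 > \cdots > D-m+1 \geq 1$, as required. The standard formula for the Macaulay bound then yields
\[
m^{\langle D \rangle} = \binom{D+1}{D+1} + \binom{D}{D} + \cdots + \binom{D-m+2}{D-m+2} = m,
\]
and in fact the identical computation shows $m^{\langle t \rangle} = m$ for every $t \geq D$ (since $t \geq m$).

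Next, I would observe that the hypothesis $\HF(S/I;D+1) = m = m^{\langle D \rangle}$ says exactly that Macaulay's bound is attained in passing from degree $D$ to degree $D+1$. Since $I$ is generated in degree at most $D$, Gotzmann's Persistence Theorem then implies that the bound continues to be attained in every subsequent step, i.e. $\HF(S/I;t+1) = \HF(S/I;t)^{\langle t \rangle}$ for all $t \geq D$. Combining this with the computation above, a straightforward induction on $t$ gives $\HF(S/I;t) = m$ for every $t \geq D$.

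Finally, since $\HF(S/I;t)$ stabilizes to the nonzero constant $m$, the Hilbert polynomial of $S/I$ is the constant polynomial $m > 0$. As the degree of the Hilbert polynomial equals $\dim(S/I) - 1$, we conclude $\dim(S/I) = 1$. The only step requiring genuine care is the explicit evaluation of $m^{\langle D \rangle}$ under the smallness hypothesis $m \leq D$; once that is in hand, the rest is a direct invocation of Gotzmann's theorem, so I do not anticipate a real obstacle.
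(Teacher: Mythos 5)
Your proof is correct and follows essentially the same route as the paper: both observe that the hypothesis $m = \HF(S/I;D) \le D$ forces $m^{\langle D\rangle} = m$, so the equality $\HF(S/I;D+1)=\HF(S/I;D)$ is exactly attainment of the Macaulay bound, and Gotzmann persistence then gives a constant Hilbert function $\equiv m>0$ for $t\ge D$, hence $\dim(S/I)=1$. You spell out the Macaulay representation explicitly where the paper instead phrases the same fact as the associated lex ideal having no minimal generator in degree $D+1$, but the substance is identical.
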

\begin{proof}
The condition $\HF(S/I;D) = \HF(S/I;D+1) \leq D$ implies, by Macaulay's Theorem, that the lexicographic ideal with the same Hilbert function as $I$ has no minimal generator in degree $D+1$. It follows from Gotzmann's Persistence Theorem \cite{G,Green_Gotzmann} that $\HF(S/I;D) = \HF(S/I;j)$ for all $j \geq D$. As this value is positive by assumption, this forces $\dim(S/I)=1$.
\end{proof}

The next proposition, even though not stated in this generality, is a direct consequence of the techniques used in \cite{CM}.

\begin{proposition} \label{CM improved} Let $I \subseteq S=\kk[x_1,\ldots,x_n]$ be a homogeneous ideal containing an ideal $\f$ generated by a regular sequence $f_1,\ldots,f_h$ of degrees $\d=(d_1,\ldots,d_h)$. Let $\f'=(f_1,\ldots,f_{h-1})$, $\d'=(d_1,\ldots,d_{h-1})$, and fix an Artinian reduction $\ov{\f'}$ of $\f'$ inside $\ov{S}=\kk[x_1,\ldots,x_{h-1}]$. Assume that every homogeneous ideal of $\ov{S}$ containing $\ov{\f'}$ satisfies $\EGHH{\d'}{h-1}$. If $d_h > \sum_{i=1}^{h-1}(d_i-1)$, then $I$ satisfies $\EGHH{\d}{n}$.
\end{proposition}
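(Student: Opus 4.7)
The strategy combines \cite[Theorem 4.1]{CK} with a direct construction of a $\d$-LPP ideal from the $\d'$-LPP ideal provided by the hypothesis. By \cite[Theorem 4.1]{CK} applied to the regular sequence $\f'$, the hypothesis that every ideal of $\ov{S}$ containing $\ov{\f'}$ satisfies $\EGHH{\d'}{h-1}$ yields that every homogeneous ideal of $S$ containing $\f'$ satisfies $\EGHH{\d'}{n}$; in particular $I$ does, so there exists a $\d'$-LPP ideal $\LL' = (\x^{\d'}) \oplus V$ with $\HF(\LL';j)=\HF(I;j)$ for every $j$, where each $V_j$ denotes the lex segment of non-$(\x^{\d'})$ monomials of degree $j$ of cardinality $|V_j|=\HF(I;j)-\HF((\x^{\d'});j)$, as in Remark \ref{Remark oplus}.

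Since $I \supseteq \f$ and $f_h$ is regular modulo $\f'$, the short exact sequence $0 \to (S/\f')(-d_h) \xrightarrow{\cdot f_h} S/\f' \to S/\f \to 0$ gives $\HF(I;j) \geq \HF(\f;j) = \HF(\f';j)+\HF(S/\f';j-d_h)$ for $j \geq d_h$; equivalently, $|V_j| \geq \HF(S/(\x^{\d'});j-d_h) = |x_h^{d_h} S_{j-d_h} \setminus (\x^{\d'})_j|$. I then define $W_j$ to be the lex segment of non-$(\x^\d)$ monomials of degree $j$ of cardinality $|V_j| - |x_h^{d_h} S_{j-d_h} \setminus (\x^{\d'})_j|$ when $j \geq d_h$ and $|V_j|$ otherwise, and set $\LL := (\x^\d) \oplus W$. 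A direct computation using the identity $\HF((\x^\d);j) - \HF((\x^{\d'});j) = |x_h^{d_h} S_{j-d_h} \setminus (\x^{\d'})_j|$ for $j \geq d_h$ shows $\HF(\LL;j)=\HF(I;j)$ for all $j$, and by construction $\LL$ has the formal shape of a $\d$-LPP ideal.

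The main obstacle is verifying that $\LL$ is an honest ideal, i.e., $S_1 \cdot W_j \subseteq (\x^\d)_{j+1} \cup W_{j+1}$ for each $j$. For $w \in W_j \subseteq V_j$ and a variable $x_i$, the ideal property of $\LL'$ gives $x_i w \in (\x^{\d'})_{j+1} \cup V_{j+1}$; when $x_i w \in (\x^\d)_{j+1}$ we are done, so the subtle case is $x_i w \in V_{j+1} \setminus (\x^\d)_{j+1}$, where I must show $x_i w \in W_{j+1}$. This reduces to comparing the lex position of $x_i w$ with the cardinality of the lex segment $W_{j+1}$ and is a Clements--Lindstr\"om-type shadow estimate. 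It is precisely here that the strict inequality $d_h > \sum_{i < h}(d_i - 1)$ plays its essential role: it forces $|W_{j+1}|$ to be large enough to absorb the non-$(\x^\d)$ part of the shadow $S_1 \cdot W_j$ after subtraction of the pure-power contribution, so that the Kruskal--Katona-type behaviour of lex segments under multiplication by a variable places $x_i w$ inside $W_{j+1}$. The detailed combinatorial verification follows the techniques of \cite{CM}.
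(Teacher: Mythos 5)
Your proposal takes a genuinely different route from the paper's, and unfortunately the route it takes leaves the central difficulty unresolved. The paper's proof never attempts to \emph{directly} build the $\d$-LPP ideal out of the $\d'$-LPP ideal for all degrees. Instead it first reduces to the Artinian case $h=n$ via \cite[Theorem 4.1]{CK}, and then invokes \cite[Lemma 12]{CM}, a linkage-duality statement, which says that to prove $\EGHH{\d}{n}$ it is enough to check $\EGHH{\d}{n}(j)$ for $0\leq j\leq d_n-2$. In that range $j+1\leq d_n-1$, so $(\x^{\d})$ and $(\x^{\d'})$ coincide in the relevant degrees, making $\EGHH{\d}{n}(j)$ literally the same condition as $\EGHH{\d'}{n}(j)$, which holds by the induction hypothesis. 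This is where the assumption $d_h>\sum_{i<h}(d_i-1)$ is used: it is exactly what makes the range from \cite[Lemma 12]{CM} small enough to be covered by the $\d'$-information alone. No combinatorial shadow estimate is ever needed.

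Your argument, by contrast, defines $W_j$ as the lex segment of non-$(\x^\d)$ monomials of cardinality $|V_j|-|x_h^{d_h}S_{j-d_h}\setminus(\x^{\d'})_j|$ for $j\geq d_h$ and then asserts that $\LL=(\x^\d)\oplus W$ is an ideal. You correctly identify this as the main obstacle, but you do not actually prove it, and the hand-waved Kruskal--Katona reasoning does not obviously close the gap. Concretely, suppose $w\in W_j$ and $x_iw\notin(\x^\d)_{j+1}$, so $x_iw\in V_{j+1}$ and one must show $x_iw\in W_{j+1}$. Write $A_{j+1}$ for the degree-$(j+1)$ monomials in $x_h^{d_h}S_{j+1-d_h}\setminus(\x^{\d'})_{j+1}$; then the number of non-$(\x^\d)$ monomials lex-larger than $x_iw$ equals (number of non-$(\x^{\d'})$ monomials lex-larger than $x_iw$) minus $|\{a\in A_{j+1}: a>_{\lex}x_iw\}|$. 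In the extremal case where $x_iw$ is the lex-smallest element of $V_{j+1}$, the estimate you need is that \emph{every} element of $A_{j+1}$ is lex-larger than $x_iw$, which is by no means automatic and does not manifestly follow from $d_h>\sum_{i<h}(d_i-1)$. Nothing in the proposal explains why this holds, nor why the argument should behave correctly without first passing to the Artinian case $h=n$, which affects the lex order on the monomials outside $(\x^{\d'})$ and hence the position of $A_{j+1}$. Finally, the appeal to ``the techniques of \cite{CM}'' is misleading: \cite{CM} does not perform this direct combinatorial transfer, it uses precisely the Lemma~12 reduction that your proof avoids. In short, the claim that $\LL$ is an ideal is the entire content of the proposition and it is asserted rather than proved.
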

\begin{proof}
By \cite[Theorem 4.1]{CK}, in order to prove the proposition we may assume that $h=n$, and that the image of $\f'+(x_n)$ inside $S/(x_n)$ is identified with the fixed Artinian reduction $\ov{\f'}$ of $\f'$. Again by \cite[Theorem 4.1]{CK}, we have that any ideal containing $\f'$ satisfies $\EGHH{\d'}{n}$, since any ideal containing $\ov{\f'}$ satisfies $\EGHH{\d}{n-1}$ by assumption. By \cite[Lemma 12]{CM}, it suffices to show that $I$ satisfies $\EGHH{\d}{n}(j)$ for all $0 \leq j \leq d_n-2$. Since $f_n$ has degree $d_n$ it is clear that, for $j$ in this range, $I$ satisfies $\EGHH{\d}{n}(j)$ if and only if $I$ satisfies $\EGHH{\d'}{n}(j)$, and therefore the proof is complete.
\end{proof}

\begin{notation} Let $\LL$ be a $\d$-LPP ideal, and $j \geq 0$ be an integer. If $\LL_j \ne S_j$ we set $\out(\LL;j)$ to be the largest monomial with respect to the lexicograpich order that does not belong to $\LL_j$. If $\LL_j = S_j$, then we set $\out(\LL;j) = 0$.
\end{notation} 

\begin{theorem} \label{THM socle} Let $I \subseteq S=\kk[x_1,\ldots,x_n]$ be a homogeneous ideal containing an ideal $\f$ generated by a regular sequence $f_1,\ldots,f_h$ of degrees $\d=(d_1,\ldots,d_h)$. Let $\f'=(f_1,\ldots,f_{h-1})$, $\d'=(d_1,\ldots,d_{h-1})$, and fix an Artinian reduction $\ov{\f'}$ of $\f'$ inside $\ov{S}=\kk[x_1,\ldots,x_{h-1}]$. Assume that every homogeneous ideal of $\ov{S}$ containing $\ov{\f'}$ satisfies $\EGHH{\d'}{h-1}$. If $d_h \geq \sum_{i=1}^{h-1}(d_i-1)$, then $I$ satisfies $\EGHH{\d}{n}$.
\end{theorem}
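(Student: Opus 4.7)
The approach extends Proposition \ref{CM improved} to cover the boundary case $d_h = \sum_{i=1}^{h-1}(d_i - 1)$, which is excluded there by the strict inequality. First I carry out the same two reductions used in Proposition \ref{CM improved}: by \cite[Theorem 4.1]{CK} I may assume $h = n$, so that $\f$ is a maximal regular sequence in $S = \kk[x_1,\ldots,x_h]$; and the hypothesis together with \cite[Theorem 4.1]{CK} implies that every homogeneous ideal of $S$ containing $\f'$ satisfies $\EGHH{\d'}{h}$. In particular, since $I \supseteq \f \supseteq \f'$, there exists a $\d'$-LPP ideal $M \subseteq S$ with $\HF(M;j) = \HF(I;j)$ for all $j \geq 0$.

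Next, I would invoke \cite[Lemma 12]{CM} to reduce the verification of $\EGHH{\d}{h}$ to checking $\EGHH{\d}{h}(j)$ in a finite range of degrees. For $j \leq d_h - 2$ the argument in the proof of Proposition \ref{CM improved} applies verbatim: since $j+1 \leq d_h - 1 < d_h$, the pure-power components $(\x^\d)_{j+1}$ and $(\x^{\d'})_{j+1}$ agree, so $\EGHH{\d}{h}(j)$ and $\EGHH{\d'}{h}(j)$ are equivalent, and $M$ witnesses both. The genuinely new content is confined to the boundary case $d_h = \sum_{i=1}^{h-1}(d_i - 1)$ at the critical degree $j = d_h - 1$, where I must exhibit a $\d$-LPP ideal $\LL$ with $\HF(\LL; d_h - 1) = \HF(I; d_h - 1)$ and $\HF(\LL; d_h) \leq \HF(I; d_h) = \HF(M; d_h)$.

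To build $\LL$, I would write $M = (\x^{\d'}) \oplus V$ as in Remark \ref{Remark oplus}, take $\LL = (\x^\d) \oplus W$ with $W_j = V_j$ for $j < d_h$, and define $W_{d_h}$ as the smallest lex-segment complement of $(\x^\d)_{d_h}$ so that $\LL$ remains an ideal; by construction $\HF(\LL; d_h - 1) = \HF(M; d_h - 1)$. The main obstacle is then the numerical estimate $\HF(\LL; d_h) \leq \HF(M; d_h)$, which amounts to a delicate monomial-by-monomial comparison between the lex segment $V_{d_h}$ of $M$ and the contribution of $x_h^{d_h}$ together with its lex-segment shadow in $(\x^\d)_{d_h}$. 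The boundary hypothesis is essential here: when $d_h$ equals the socle degree $\sum_{i=1}^{h-1}(d_i - 1)$ of the Artinian reduction $\ov{S}/\ov{\f'}$, the size of $V_{d_h}$ is tightly controlled relative to $x_h^{d_h}$ in the lex order, which is precisely what makes the estimate possible. When the resulting inequality is strict, Lemma \ref{Lemma lex} provides the tool to adjust $\LL$ within the class of $\d$-LPP ideals so as to match the target Hilbert function, while Lemma \ref{Lemma Gotzmann} can be invoked to preclude Hilbert function growth in higher degrees incompatible with $I$ containing the regular sequence $\f$.
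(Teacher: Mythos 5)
The reduction steps (assume $h=n$ via \cite[Theorem 4.1]{CK}, reduce to checking $\EGHH{\d}{n}(j)$ for $j\leq d_n-1$ via \cite[Lemma 12]{CM}, and observe that only $j=d_n-1$ is new when $d_n=\sum_{i=1}^{n-1}(d_i-1)$) are carried out correctly and match the paper. However, the rest of the proposal has a genuine gap: the claimed construction and the key numerical estimate are not actually established, and the naive construction you describe does not work in general.

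Concretely, if you write $M=(\x^{\d'})\oplus V$ and set $\LL=(\x^\d)\oplus W$ with $W_j=V_j$ for $j<d_n$, then to make $\LL$ an ideal you must include in $(\x^\d)_{d_n}\oplus W_{d_n}$ the entire shadow of $\LL_{d_n-1}=M_{d_n-1}$. When $x_n^{d_n}\in V_{d_n}$ this is unproblematic, but when $x_n^{d_n}\notin M$, the smallest admissible $W_{d_n}$ has $\HF(\LL;d_n)=\HF(M;d_n)+1$ unless the smallest monomial of $V_{d_n}$ happens to be a minimal generator of $M$; if instead $M$ has no minimal generator in degree $d_n$ and $\HF(I;d_n)=\HF(M;d_n)$, your construction overshoots and there is no room to drop a monomial. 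This is exactly the hard case, and the sentence ``a delicate monomial-by-monomial comparison\ldots\ which is precisely what makes the estimate possible'' names the difficulty without resolving it. Moreover, starting from $M$ (an LPP ideal with the \emph{same} Hilbert function as $I$ in all degrees) is not what the paper does and is actually harder to control, because you have no structural information about $I$ beyond its Hilbert function.

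The paper's proof takes a substantially different route that you do not reproduce. It first replaces $I$ by the subideal $Q=(f_1,\ldots,f_{n-1},v_1,\ldots,v_c)$ generated by $\f'$ together with lifts of a basis of $(I/\f)_{d_n-1}$, and splits into the cases $f_n\notin Q$ (easy: add $x_n^{d_n}$ to an LPP for $Q$) and $f_n\in Q$. In the second case one cannot argue directly as you propose; instead the paper chooses $v_c$ so that $\g=(f_1,\ldots,f_{n-1},v_c)$ is a complete intersection of degree $\d''=(d_1,\ldots,d_{n-1},d_n-1)$, passes to the linked ideal $J=\g:Q$ (which again contains $\f'$ and to which the inductive $\EGHH{\d'}{n-1}$ hypothesis applies), uses Lemma~\ref{Lemma lex} to trim the resulting $\d'$-LPP ideal, links back via a monomial complete intersection, invokes Mermin--Peeva \cite[Theorem 1.2]{MP} to lexify the linked monomial ideal, and then performs the careful trade of $x_n^{d_n-1}$ for $x_n^{d_n}$ with an explicit estimate on $\HF((u,x_n^{d_n},\LL_5)/\LL_5;d_n)$ and the index $\ell$, finally using Lemma~\ref{Lemma Gotzmann} to rule out $\dim(S/Q)=1$. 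None of these ingredients — the passage to $Q$, the case split on $f_n\in Q$, the linkage through $\g$, or the Mermin--Peeva lexification — appear in your proposal, and without them the crucial inequality $\HF(\LL;d_n)\leq\HF(I;d_n)$ is unjustified.
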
 
\begin{proof}
Without loss of generality, we may assume that $\kk$ is infinite. By \cite[Theorem 4.1]{CK} we may assume that $h=n$. By Proposition \ref{CM improved}, we may assume that $d_n = \sum_{i=1}^{n-1}(d_i-1)$, so that $\sum_{i=1}^{n} (d_i-1) = 2d_n-1$. By \cite[Lemma 12]{CM}, in order to prove the theorem it suffices to show that $I$ satisfies $\EGHH{\d}{n}(j)$ for all $0 \leq j \leq d_n-1$. As in the proof of Proposition \ref{CM improved} we have that $I$ satisfies $\EGHH{\d'}{n}$ by \cite[Theorem 4.1]{CK}, given that $I$ contains $\f'$ and $\EGHH{\d'}{n-1}$ is assumed to be true. Since for $j \leq d_n-2$ $\EGHH{\d'}{n}(j)$ is clearly equivalent to $\EGHH{\d}{n}(j)$, it suffices to prove that $I$ satisfies $\EGHH{\d}{n}(d_n-1)$. 

Let $\{v_1,\ldots,v_c\} \subseteq I_{d_n-1}$ be the pre-image in $S$ of a $\kk$-basis of $(I/\f)_{d_n-1}$, and consider the ideal $Q= (f_1,\ldots,f_{n-1},v_1,\ldots,v_c)$. First, assume that $f_n \notin Q$. In this case, we have $\HF(I;d_n) \geq \HF(Q;d_n)+1$. By assumption, we have that $Q$ satisfies $\EGHH{\d'}{n}(d_n-1)$, that is, there exists a $\d'$-LPP ideal $\LL$ with the same Hilbert function as $Q$ (and thus as $I$) in degree $d_n-1$, and such that $\HF(Q;d_n) \geq \HF(\LL;d_n)$. If $x_n^{d_n} \in \LL$, then $I_{d_n} = Q_{d_n} = S_{d_n}$, which is a contradiction since $f_n \notin Q$. Therefore $x_n^{d_n} \notin \LL$, and we have that $\HF(\LL+(x_n^{d_n});d_n) = \HF(\LL;d_n)+1$. In particular, we have that $\HF(I;d_n) \geq \HF(\LL+(x_n^{d_n});d_n)$. Since $\LL+(x_n^{d_n})$ is a $\d$-LPP ideal with the same Hilbert function as $I$ in degree $d_n-1$, it follows that $I$ satisfies $\EGHH{\d}{n}(d_n-1)$, as desired.

From now on, assume that $f_n \in Q$. Under this assumption, in order to prove the theorem we may replace $I$ by $Q$. Then, since $Q$ has height $n$ and is generated in degree at most $d_n-1$, we may assume that $f_1,\ldots,f_{n-1},v_c$ is a maximal regular sequence of degree $\d'' = (d_1,\ldots,d_{n-1},d_n-1)$ inside $Q$.  Let $\g=(f_1,\ldots,f_{n-1},v_c)$ and $J=\g:Q$. As the degree of the socle of $S/\g$ is $2d_n-2,$ by linkage (for instance, see \cite[Corollary 5.2.19]{Migliore}), we have that
\[
\HF(S/Q;d_n) = \HF(S/\g;d_n-2)- \HF(S/J; d_n-2), \text{ and }
\]
\[
\HF(S/Q;d_n-1) = \HF(S/\g;d_n-1) - \HF(S/J; d_n-1).
\]
Let $\LL_1$ be the $\d'$-LPP ideal with the same Hilbert function as $J$, which exists by hypothesis. Observe that $\HF(\LL_1;d_n-2) \geq \HF((\x^{\d'});d_n-2) = \HF(\g;d_n-2)$. If equality holds, then by the linkage formula used above we conclude that $\HF(S/Q;d_n) = 0$, that is, $Q_{d_n} = S_{d_n}$. In this case, $Q$ trivially satisfies $\EGHH{\d}{n}(d_n-1)$. If $\HF(\LL_1;d_n-2) > \HF((\x^{\d'});d_n-2)$, then by Lemma \ref{Lemma lex} applied to $\LL_1$ with $D=d_n-2$ and $D'=d_n-1$, there exists a $\d'$-LPP ideal $\LL_2$ such that
\[
\HF(\LL_2;j) = \begin{cases} \HF((\x^{\d'});j)& \text{ if } j < d_n-2 \\
\HF(\LL_1;j)-1 & \text{ if } j=d_n-2,d_n-1 \\
\HF(\LL_1;j) & \text{ if } j > d_n-1
\end{cases}
\]
In particular, $\LL_2+(x_n^{d_n-1})$ is a $\d''$-LPP ideal which has the same Hilbert function as $\LL_1$ in degree $d_n-1$, and value one less than $\LL_1$ in degree $d_n-2$. If we let $\LL_3=(\x^{\d''}):(\LL_2+(x_n^{d_n-1}))$, again by linkage we have $\HF(Q;d_n-1) = \HF(\LL_3;d_n-1)$ and $\HF(Q;d_n) = \HF(\LL_3;d_n)-1$. Furthermore, by \cite[Theorem 1.2]{MP} there exists a $\d''$-LPP ideal $\LL_4$ with the same Hilbert function as $\LL_3$. As in Remark \ref{Remark oplus}, we may write $\LL_4 = (\x^{\d''}) \oplus  V$, where $V = \bigoplus_j V_j$ is a graded $\kk$-vector space. We now want to trade $x_n^{d_n-1} \in (\x^{\d''})$ for $x_n^{d_n} \in (\x^{\d})$ in the ideal $\LL_4$ we have just defined. To do so, we need to adjust the vector space $V_{d_n-1}$ by adding one specific monomial of degree $d_n-1$, and then estimate its growth. 

More specifically, consider the $\kk$-vector space $W=V_{d_n-1} \oplus V_{d_n}$, and let $\LL_5$ be the ideal generated by $(\x^{\d'}) \oplus W$. It can be shown that $\LL_5$ is a $\d'$-LPP ideal.  Let $u = \out(\LL_5;d_n-1)$, and observe that $u \ne 0$. In fact, if $u=0$, then necessarily $x_n^{d_n-1} \in W$, and therefore $(\LL_5)_{d_n-1}=S_{d_n-1}$. This forces $(\LL_4)_{d_n-1}= S_{d_n-1}$, and thus $Q_{d_n-1}=  S_{d_n-1}$, because $Q$ and $\LL_4$ have the same Hilbert function in degree $d_n-1$. But this is a contradiction, since this would imply $Q_{d_n} = (\LL_4)_{d_n} = S_{d_n}$, while we are assuming that $\HF(Q;d_n) = \HF(\LL_4;d_n)-1$.  

Now we let $\LL(\d) = \LL_5 + (u,x_n^{d_n})$, which is a $\d$-LPP ideal. Observe that $x_n^{d_n}$ is necessarily a minimal generator of $\LL(\d)$, by what we have observed above. Moreover, we have that
\begin{align*}
\HF(\LL(\d);d_n-1) & = \HF(\LL_5+(u);d_n-1) =\HF(\LL_5+(x_n^{d_n-1});d_n-1) \\
& =  \HF(\LL_4;d_n-1) = \HF(Q;d_n-1), \text{ and }
\end{align*}
\begin{align*}
\HF(\LL(\d);d_n) & =\HF(\LL_5;d_n) +  \HF((u,x_n^{d_n},\LL_5)/\LL_5;d_n)  \\
& =\HF(\LL_4;d_n) - \HF(\LL_4/\LL_5;d_n) +  \HF((u,x_n^{d_n},\LL_5)/\LL_5;d_n)   \\
& = \HF(Q;d_n)+1 - \HF((x_n^{d_n-1},\LL_5)/\LL_5;d_n) + \HF((u,x_n^{d_n},\LL_5)/\LL_5;d_n).
\end{align*}
To conclude the proof, we want to show that $\HF(\LL(\d);d_n) \leq \HF(Q;d_n)$. Since the degree of $u$ is $d_n-1 = \sum_{i=1}^{n-1}(d_i-1)-1$, it is easy to see from the fact that lex-segments are strongly stable that $\HF((u,x_n^{d_n},\LL_5)/\LL_5; d_n) = 3$ if and only if $u=x_1^{d_1-1} \cdots x_{n-1}^{d_n-2}$, and $\HF((u,x_n^{d_n},\LL_5)/\LL_5; d_n) \leq 2$ in all the other cases.  However, if $u=x_1^{d_1-1} \cdots x_{n-1}^{d_n-2}$, then it follows by a direct computation that $\HF(\LL(\d);d_n-1) = \HF((\x^\d);d_n-1) + 1$. 
In our running assumptions, this forces $c=1$, and $Q=(f_1,\ldots,f_{n-1},v_1)$ to be generated by a regular sequence of degree $\d''$. Since by \cite[Theorem 1.2]{MP} there is a $\d$-LPP ideal with the same Hilbert function as $(\x^{\d''})$, this concludes the proof in this case. 

We will henceforth assume that $\HF((u,x_n^{d_n},\LL_5)/\LL_5; d_n) \leq 2$, so that by the above computation
\[
\HF(\LL(\d);d_n) \leq  \HF(Q;d_n) - \HF((x_n^{d_n-1},\LL_5)/\LL_5;d_n) + 3.
\]

We now need to estimate the growth of $(x_n^{d_n-1},\LL_5)/\LL_5$ in degree $d_n$. In order to do so, we let $\ell = \min\{i =1,\ldots,n \mid x_i x_n^{d_n-1} \notin \LL_5\}$. If $\ell \leq n-2$, then $\HF((x_n^{d_n-1},\LL_5)/\LL_5;d_n) \geq 3$, and the proof is complete. 

For the rest of the proof, assume that $\ell \geq n-1$. Under this assumption, we necessarily have that $u \in \kk[x_{n-1},x_n]$ and, in fact, $u \leq_{\lex} x_{n-1}^{d_{n-1}-1} x_n^{d_n-d_{n-1}}$. Moreover, a direct computation shows that $\HF(S/\LL(\d);d_n-1) \leq d_n-1$. Since $\HF(S/Q;d_n-1) = \HF(S/\LL(\d);d_n-1)$, Macaulay's Theorem implies that $\HF(S/Q;j) \leq \HF(S/Q;d_n-1)$ for all $j \geq d_n-1$. If $\HF(S/Q;d_n-1) = \HF(S/Q;d_n)$, then by Lemma \ref{Lemma Gotzmann} we would have $\dim(S/Q)=1$. However, $\f \subseteq Q$ by our running assumptions, and therefore $\dim(S/Q)=0$, a contradiction. Thus, we have that $\HF(S/Q;d_n) < \HF(S/Q;d_n-1)$. Therefore $\HF(S/Q;d_n) \leq \HF(S/Q;d_n-1) - 1 = \HF(S/\LL(\d);d_n-1) -1 = \HF(S/\LL(\d);d_n)$, where the last equality follows from a direct computation, thanks to the knowledge of the monomial $u$.
\end{proof}

As a consequence of Theorem \ref{THM socle}, we obtain the main result of this article.

\begin{corollary} \label{Cor socle} Let $I \subseteq S=\kk[x_1,\ldots,x_n]$ be a homogeneous ideal containing a regular sequence of degrees $\d$, such that $d_i \geq \sum_{j=1}^{i-1} (d_j-1)$ for all $i \geq 3$. Then $I$ satisfies $\EGHH{\d}{n}$.
\end{corollary}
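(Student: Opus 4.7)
The approach is a straightforward induction on $h$, the length of the regular sequence, using Theorem \ref{THM socle} as the engine of the inductive step.

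For the base case, note that the numerical hypothesis ``$d_i \geq \sum_{j=1}^{i-1}(d_j-1)$ for all $i \geq 3$'' is vacuous whenever $h \leq 2$. So the base cases to verify are $h=1$ and $h=2$: the case $h=1$ follows from Macaulay's theorem together with adjoining $x_1^{d_1}$ to the lex ideal in the natural way, and the case $h=2$ is the known result of Richert and Cooper recalled in the introduction.

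For the inductive step, assume the corollary holds for every regular sequence of length $h-1$ satisfying the numerical hypothesis, and let $I \subseteq S$ contain a regular sequence $f_1,\ldots,f_h$ of degrees $\d=(d_1,\ldots,d_h)$ as in the statement. Set $\f' = (f_1,\ldots,f_{h-1})$ and $\d' = (d_1,\ldots,d_{h-1})$. The truncated degree sequence $\d'$ still satisfies the corollary's numerical hypothesis, since the constraints now only run up to $i=h-1$. Applying the inductive hypothesis inside $\ov{S} = \kk[x_1,\ldots,x_{h-1}]$ (the corollary is stated for arbitrary ambient polynomial rings, so this is legitimate), every homogeneous ideal of $\ov{S}$ containing the Artinian reduction $\ov{\f'}$ of $\f'$ satisfies $\EGHH{\d'}{h-1}$. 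Meanwhile, the numerical hypothesis applied at $i=h$ gives exactly $d_h \geq \sum_{i=1}^{h-1}(d_i-1)$, which is the remaining assumption of Theorem \ref{THM socle}. Invoking that theorem yields $\EGHH{\d}{n}$ for $I$, completing the induction.

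There is no substantial obstacle once Theorem \ref{THM socle} is available: the numerical hypothesis is stable under truncation $\d \mapsto \d'$, so the induction threads through cleanly, and the base cases are classical. The only mild subtlety is confirming that the inductive hypothesis transfers to the specific Artinian ambient ring $\ov{S}$ appearing in Theorem \ref{THM socle}, which is immediate from the fact that the corollary is formulated for arbitrary $n$.
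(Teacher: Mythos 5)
Your proposal is correct and matches the paper's (one-line) proof: induction on $h$ with Theorem \ref{THM socle} as the inductive step. The only minor observation is that a separate appeal to Richert/Cooper for $h=2$ is unnecessary, since $d_2\geq d_1 > d_1-1$ always holds and so the inductive step already carries you from $h=1$ to $h=2$.
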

\begin{proof} This follows immediately by induction on the length $h$ of the regular sequence, and Theorem \ref{THM socle}.
\end{proof}

Consider the EGH conjecture in the case of the degree sequences $\d=(2,d,d)$ and $\d=(3,d,d)$, both covered in \cite{Cooper1}. While the first case is a consequence of Corollary \ref{Cor socle}, the same is not true for $\d=(3,d,d)$. However, by means of the same techniques used in Theorem \ref{THM socle}, we can recover the second case as well, and prove an additional one.

\begin{proposition} \label{Prop Cooper} Let $I \subseteq \kk[x_1,\ldots,x_n]$ be a homogeneous ideal that contains a regular sequence of degree $\d$, where $\d$ is either $(3,d,d)$ or $(2,2,d,d)$. Then $I$ satisfies $\EGHH{\d}{n}$.
\end{proposition}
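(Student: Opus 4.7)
The plan is to mirror the proof of Theorem \ref{THM socle}, carefully tracking the fact that in both cases the top degree $d_h = d$ falls short of the hypothesis $d_h \geq \sum_{i<h}(d_i - 1)$ by exactly one. The base cases --- EGH for $\d' = (3,d)$ and for $\d' = (2,2,d)$ --- are known by \cite{Richert,Cooper} and \cite{Cooper1}, so after reducing to $h = n$ via \cite[Theorem 4.1]{CK} and invoking \cite[Lemma 12]{CM}, the task is the verification of $\EGHH{\d}{n}(j)$ in the critical degrees. For $j \leq d - 2$ this is equivalent to $\EGHH{\d'}{n}(j)$, so the substantive case is $j = d - 1$.

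For $j = d - 1$, I would again form $Q = (f_1, \ldots, f_{n-1}, v_1, \ldots, v_c)$ by lifting a $\kk$-basis of $(I/\f)_{d-1}$, and split into cases. When $f_n \notin Q$, the argument from Theorem \ref{THM socle} carries over unchanged, since it only uses that $x_n^{d}$ is the lex-smallest monomial in its degree. When $f_n \in Q$, I would replace $I$ by $Q$, arrange $f_1, \ldots, f_{n-1}, v_c$ to be a regular sequence of degree $\d'' = (d_1, \ldots, d_{n-1}, d-1)$, and apply liaison with $\g = (f_1, \ldots, f_{n-1}, v_c)$ and $J = \g : Q$. The socle degree of $S/\g$ is now $2d - 1$, one higher than in Theorem \ref{THM socle}, so the linkage formulae relate $\HF(S/Q; d)$ and $\HF(S/Q; d-1)$ to $\HF(S/J; d-1)$ and $\HF(S/J; d)$ respectively. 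Applying EGH for $\d'$ to $J$, followed by Lemma \ref{Lemma lex} and the ``trade'' of $x_n^{d-1}$ for $x_n^{d}$, produces a candidate $\d$-LPP ideal $\LL(\d)$.

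The hard part will be the final estimate comparing $\HF(\LL(\d); d)$ with $\HF(Q; d)$, analogous to the inequality $\HF((u, x_n^{d_n}, \LL_5)/\LL_5; d_n) \leq 2$ at the end of Theorem \ref{THM socle}. The extra $+1$ in the socle degree shifts the combinatorial form of the lex-largest ``trade'' monomial $u$, so a new case analysis is required to rule out the extremal configurations. The saving grace is the small number of variables --- $n = 3$ for $\d = (3,d,d)$ and $n = 4$ for $\d = (2,2,d,d)$ --- which keeps the list of potential $u$-shapes short. In each exceptional case I expect to either reduce to $c = 1$, so that $Q$ is itself a complete intersection of degree $\d''$ (putting us in the scope of \cite[Theorem 1.2]{MP}), or to invoke Lemma \ref{Lemma Gotzmann} to contradict $\dim(S/Q) = 0$, precisely as in the conclusion of Theorem \ref{THM socle}.
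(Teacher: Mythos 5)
Your case $f_n \notin Q$ matches the paper exactly, but for $f_n \in Q$ you have set off on a much harder road than the paper actually takes, and you have left the hard part as an unverified plan rather than a proof. The paper's handling of $f_n \in Q$ is a short-circuit: once $f_1,\ldots,f_{n-1},v_c$ is a regular sequence inside $Q$ of degrees $d_1,\ldots,d_{n-1},d-1$, sort those degrees to get $\d''$ and observe that $\d''$ \emph{already falls within the scope of Corollary~\ref{Cor socle}}. Concretely, for $\d=(3,d,d)$ one gets $\d''=(3,d-1,d)$ (or $(2,3,3)$ when $d=3$), and $d \geq (3-1)+(d-2)=d$ holds with equality; for $\d=(2,2,d,d)$ one gets $\d''=(2,2,d-1,d)$ (or $(1,2,2,2)$ when $d=2$), and again the inequalities of Corollary~\ref{Cor socle} check out. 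Thus $I$ satisfies $\EGHH{\d''}{n}$ outright, and since the resulting $\d''$-LPP ideal contains $(\x^\d)$, one finishes with a single application of \cite[Theorem 1.2]{MP}. No linkage, no Lemma~\ref{Lemma lex}, no trade of $x_n^{d-1}$ for $x_n^d$, and no estimate on $\HF((u,x_n^d,\LL_5)/\LL_5;d)$ is needed at all.

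By contrast, you propose to re-run the entire Theorem~\ref{THM socle} machinery with the socle degree shifted from $2d-2$ to $2d-1$. Because of that shift the linkage duality now pairs degree $d$ of $Q$ with degree $d-1$ of $J$ (and vice versa), which alters the mechanics: the roles of $D$ and $D'$ in Lemma~\ref{Lemma lex}, the degree at which $u=\out(\LL_5;\cdot)$ is taken, and the growth estimate all change, and it is genuinely unclear that the final inequality still closes. You acknowledge this yourself (``a new case analysis is required to rule out the extremal configurations'') but do not carry it out, so as written the proof has a gap precisely at its crux. You do mention, as an ``exceptional case,'' reducing to $c=1$ so that $Q$ is a complete intersection of degree $\d''$; the paper's insight is that this reduction is unnecessary, because Corollary~\ref{Cor socle} applies to the degree sequence $\d''$ for \emph{any} $c$, not just $c=1$. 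Recognizing that observation would replace your hardest step with a two-line argument.
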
 
\begin{proof} 
Let $\f$ be the ideal generated by a regular sequence of degree $\d$ inside $I$, and $\f' \subseteq \f$ be the ideal generated only by the forms of degree $\d'$, where $\d'=(3,d)$ if $\d=(3,d,d)$, and $\d'=(2,2,d)$ if $\d=(2,2,d,d)$. Observe that Conjecture $\EGHH{\d'}{n}$ is known to hold for every ideal containing a regular sequence of degree $\d'$ by \cite{Richert,Cooper} in the first case, and by Corollary \ref{Cor socle} in the second. By \cite[Lemma 12]{CM}, it suffices to prove that $I$ satisfies $\EGHH{\d}{n}(j)$ for $0 \leq j \leq d-1$. Moreover, as in the proof of Theorem \ref{THM socle}, the critical case is $j=d-1$. Let $\{v_1,\ldots,v_c\}$ be the pre-image in $S$ of a $\kk$-basis of $(I/\f)_{d-1}$, and $Q=\f'+(v_1,\ldots,v_c)$.

First assume $\d=(3,d,d)$. If $f_3 \notin Q$, then since $Q$ satisfies $\EGHH{\d'}{n}$ there exists a $\d'$-LPP ideal $\LL$ with the same Hilbert function as $Q$ in degree $d-1$, and such that $\HF(Q;d) \geq \HF(\LL;d)$. As in the proof of Theorem \ref{THM socle}, one can check that $\LL+(x_3^d)$ is a $\d$-LPP ideal, and we have that $\HF(I;d) \geq \HF(Q;d)+1 \geq \HF(\LL;d)+1=\HF(\LL+(x_3^d);d)$. On the other hand, if $f_3 \in Q$, then we may assume that $f_1,f_2,v_c$ is a regular sequence of degrees $3,d$ and $d-1$. If $d=3$, then we let $\d''=(2,3,3)$, while if $d>3$, then we let $\d''=(3,d-1,d)$. Either way, $I$ satisfies $\EGHH{\d''}{n}$ by Corollary \ref{Cor socle}, and therefore there exists a $\d''$-LPP ideal $\LL$ with the same Hilbert function as $I$. In particular, since $\LL$ contains $(x_1^3,x_2^d,x_3^d)$, we have that $I$ satisfies $\EGHH{\d}{n}$ by \cite[Theorem 1.2]{MP}. 

The case $\d=(2,2,d,d)$ is similar. If $f_4 \notin Q$, then the proof goes as in the previous case, using that $I$ satisfies $\EGHH{\d'}{n}$. If $f_4 \in Q$, then we may assume that $f_1,f_2,f_3,v_c$ forms a regular sequence of degrees $2,2,d$ and $d-1$. If $d=2$ then we let $\d''=(1,2,2,2)$, while if $d>2$ we let $\d''=(2,2,d-1,d)$. Either way, $I$ satisfies $\EGHH{\d''}{n}$ by Corollary \ref{Cor socle}, and we conclude using \cite[Theorem 1.2]{MP} as above that it satisfies $\EGHH{\d}{n}$ as well.
\end{proof}


\begin{thebibliography}{10}

\bibitem{A}
Abed Abedelfatah.
\newblock On the {E}isenbud-{G}reen-{H}arris conjecture.
\newblock {\em Proc. Amer. Math. Soc.}, 143(1):105--115, 2015.

\bibitem{A1}
Abed Abedelfatah.
\newblock Hilbert functions of monomial ideals containing a regular sequence.
\newblock {\em Israel J. Math.}, 214(2):857--865, 2016.

\bibitem{CCV}
Giulio Caviglia, Alexandru Constantinescu, and Matteo Varbaro.
\newblock On a conjecture by {K}alai.
\newblock {\em Israel J. Math.}, 204(1):469--475, 2014.

\bibitem{CK1}
Giulio Caviglia and Manoj Kummini.
\newblock Poset embeddings of {H}ilbert functions.
\newblock {\em Math. Z.}, 274(3-4):805--819, 2013.

\bibitem{CK}
Giulio Caviglia and Manoj Kummini.
\newblock Poset embeddings of {H}ilbert functions and {B}etti numbers.
\newblock {\em J. Algebra}, 410:244--257, 2014.

\bibitem{CM}
Giulio Caviglia and Diane Maclagan.
\newblock Some cases of the {E}isenbud-{G}reen-{H}arris conjecture.
\newblock {\em Math. Res. Lett.}, 15(3):427--433, 2008.

\bibitem{CL}
G.~F. Clements and B.~Lindstr\"{o}m.
\newblock A generalization of a combinatorial theorem of {M}acaulay.
\newblock {\em J. Combinatorial Theory}, 7:230--238, 1969.

\bibitem{Cooper1}
Susan~M. Cooper.
\newblock Subsets of complete intersections and the {EGH} conjecture.
\newblock In {\em Progress in commutative algebra 1}, pages 167--198. de
  Gruyter, Berlin, 2012.

\bibitem{Cooper}
Susan~Marie Cooper.
\newblock Growth conditions for a family of ideals containing regular
  sequences.
\newblock {\em J. Pure Appl. Algebra}, 212(1):122--131, 2008.

\bibitem{EGH}
David Eisenbud, Mark Green, and Joe Harris.
\newblock Higher {C}astelnuovo theory.
\newblock Number 218, pages 187--202. 1993.
\newblock Journ\'{e}es de G\'{e}om\'{e}trie Alg\'{e}brique d'Orsay (Orsay,
  1992).

\bibitem{EGH_CB}
David Eisenbud, Mark Green, and Joe Harris.
\newblock Cayley-{B}acharach theorems and conjectures.
\newblock {\em Bull. Amer. Math. Soc. (N.S.)}, 33(3):295--324, 1996.

\bibitem{Gash}
Vesselin Gasharov.
\newblock Hilbert functions and homogeneous generic forms. {II}.
\newblock {\em Compositio Math.}, 116(2):167--172, 1999.

\bibitem{G}
Gerd Gotzmann.
\newblock Eine {B}edingung f\"{u}r die {F}lachheit und das {H}ilbertpolynom
  eines graduierten {R}inges.
\newblock {\em Math. Z.}, 158(1):61--70, 1978.

\bibitem{Green_Gotzmann}
Mark Green.
\newblock Restrictions of linear series to hyperplanes, and some results of
  {M}acaulay and {G}otzmann.
\newblock In {\em Algebraic curves and projective geometry ({T}rento, 1988)},
  volume 1389 of {\em Lecture Notes in Math.}, pages 76--86. Springer, Berlin,
  1989.

\bibitem{GuHo}
Sema G{\"u}nt{\"u}rk{\"u}n and Melvin Hochster.
\newblock The {E}isenbud-{G}reen-{H}arris {C}onjecture for defect two quadratic
  ideals.
\newblock {\em To appear in Math. Res. Lett.}, 2019.

\bibitem{HP}
J\"{u}rgen Herzog and Dorin Popescu.
\newblock Hilbert functions and generic forms.
\newblock {\em Compositio Math.}, 113(1):1--22, 1998.

\bibitem{Katona}
G.~Katona.
\newblock A theorem of finite sets.
\newblock In {\em Theory of graphs ({P}roc. {C}olloq., {T}ihany, 1966)}, pages
  187--207, 1968.

\bibitem{Kruskal}
Joseph~B. Kruskal.
\newblock The number of simplices in a complex.
\newblock In {\em Mathematical optimization techniques}, pages 251--278. Univ.
  of California Press, Berkeley, Calif., 1963.

\bibitem{MP}
Jeffrey Mermin and Irena Peeva.
\newblock Lexifying ideals.
\newblock {\em Math. Res. Lett.}, 13(2-3):409--422, 2006.

\bibitem{Migliore}
Juan~C. Migliore.
\newblock {\em Introduction to liaison theory and deficiency modules}, volume
  165 of {\em Progress in Mathematics}.
\newblock Birkh\"{a}user Boston, Inc., Boston, MA, 1998.

\bibitem{Richert}
Benjamin~P. Richert.
\newblock A study of the lex plus powers conjecture.
\newblock {\em J. Pure Appl. Algebra}, 186(2):169--183, 2004.

\end{thebibliography}
\end{document}